\newtheorem{question}{Question}[section]
\newtheorem{theorem}{Theorem}[section]
\newtheorem{lemma}[theorem]{Lemma}
\newtheorem{corollary}[theorem]{Corollary}
\theoremstyle{definition}
\newtheorem{definition}[theorem]{Definition}
\newtheorem{example}[theorem]{Example}
\newtheorem{remark}[theorem]{Remark}
\newcommand{\norm}[1]{\left\lVert#1\right\rVert}
\newcommand{\subtitle}[1]{%
  \posttitle{%
    \par\end{center}
    \begin{center}\large#1\end{center}
    \vskip0.5em}%
}
\newcommand{\property}[1]{\textbf{(#1)}}
\title{Distribution of matrices over $\mathbb{F}_q[x]$}
\author{Yibo Ji}
\date{}
\begin{document}

\maketitle
\begin{abstract} 
  In this paper, we count the number of matrices $A = (A_{i,j} )\in \mathcal{O} \subset Mat_{n\times n}(\mathbb{F}_q[x])$ where $\deg(A_{i,j})\leq k, 1\leq i,j\leq n$, $\deg(\det A) = t$, and $\mathcal{O}$ is a given orbit of $GL_n(\mathbb{F}_q[x])$. By an elementary argument, we show that the above number is exactly $\# GL_n(\mathbb{F}_q)\cdot q^{(n-1)(nk-t)}$. This formula gives an equidistribution result over $\mathbb{F}_q[x]$, which is an analogue, in strong form, of a result over $\mathbb{Z}$ proved in \cite{DRS93} and \cite{EM93}.
\end{abstract}
{\bf Keywords:} Polynomials and matrices, Polynomials over finite fields

\section{Background}
Two papers from 1993, \cite{EM93} and \cite{DRS93},
 considered the problem of integer point-counting in the setting of algebraic groups. They linked this problem to volumes on quotient spaces over affine symmetric varieties. The following is the main theorem in \cite{DRS93}.

\begin{theorem}
(\cite{DRS93})
Suppose we have a semisimple $\mathbb{Q}-$group $G$ which admits a $\mathbb{Q}-$representation on a real vector space $W$. Take $\vec{w}\in W$ such that $G\vec{w}$ is Zariski closed in $W$. Suppose the stabilizer $H(\mathbb{R})$ of $\vec{w}$ is a `symmetric subgroup, the fixed elements of an involution of $G(\mathbb{R})$. In this case, $G\vec{w}$ is called a symmetric affine variety. Suppose that $Vol\left(H(\mathbb{R})/H(\mathbb{Z})\right)$ and $Vol\left(G(\mathbb{R})/G(\mathbb{Z})\right)$ both are finite. Furthermore, normalize $dg$ on $G(\mathbb{R})$ and $dh$ on $H(\mathbb{R})$ such that \[Vol\left(H(\mathbb{R})/H(\mathbb{Z})\right)=Vol\left(G(\mathbb{R})/G(\mathbb{Z})\right)=1.\] Moreover, there is a $G(\mathbb{R})-$invariant measure $d\dot{g}$ on $G(\mathbb{R})/H(\mathbb{R})$ satisfying $dg=d\dot{g}dh$. Define $\mu(T):=\int_{\norm{\dot{g}\vec{w}}\leq T}d\dot{g}$\ ($\norm{\cdot}$ means Euclidean norm here). Take $O=G(\mathbb{Z})\vec{w}$. Denote the distribution function of $O$ to be \[N(T,O)=\{\vec{w}'\in O|\norm{\vec{w}'}\leq T\}.\]Then we have the following asymptotic behavior:\[N(T,O)\sim \mu(T),  \text{ as }T\text{ tends to infinity}.\]
\end{theorem}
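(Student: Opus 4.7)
The plan is to convert the counting problem into an integral on $G(\mathbb{Z})\backslash G(\mathbb{R})$ and then prove a pointwise equidistribution statement for the resulting summation kernel. First, since $H(\mathbb{R})$ is the stabilizer of $w$, there is a bijection $O = G(\mathbb{Z})w \cong G(\mathbb{Z})/H(\mathbb{Z})$, so
\[
N(T,O) = \#\{\gamma H(\mathbb{Z}) \in G(\mathbb{Z})/H(\mathbb{Z}) : \norm{\gamma w}\leq T\}.
\]
Define $F_T(g) = \sum_{\gamma H \in G(\mathbb{Z})/H(\mathbb{Z})} \mathbf{1}_{\norm{g^{-1}\gamma w}\leq T}$; this is left $G(\mathbb{Z})$-invariant, hence descends to $G(\mathbb{Z})\backslash G(\mathbb{R})$, and satisfies $F_T(e) = N(T,O)$.

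Next I would compute the average of $F_T$ against the probability measure on $G(\mathbb{Z})\backslash G(\mathbb{R})$. A direct unfolding, using $dg = d\dot g\,dh$ together with the chosen normalizations $Vol(H(\mathbb{R})/H(\mathbb{Z})) = Vol(G(\mathbb{R})/G(\mathbb{Z})) = 1$, yields
\[
\int_{G(\mathbb{Z})\backslash G(\mathbb{R})} F_T(g)\,dg \;=\; \int_{G(\mathbb{R})/H(\mathbb{R})} \mathbf{1}_{\norm{\dot g w}\leq T}\, d\dot g \;=\; \mu(T),
\]
so the theorem reduces to showing that $F_T(e)$ is asymptotic to its own integral average.

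For the equidistribution step I would smooth: pick a nonnegative bump $\psi_\epsilon \in C_c^\infty(G(\mathbb{R}))$ of total mass one supported in an $\epsilon$-ball around the identity, and analyze $(\psi_\epsilon \ast F_T)(e)$ in place of $F_T(e)$. Expanding $\psi_\epsilon \ast F_T$ in $L^2(G(\mathbb{Z})\backslash G(\mathbb{R}))$, projection onto the constants yields the main term $\mu(T)$, while the orthogonal contribution is controlled by matrix-coefficient decay on $L^2_0(G(\mathbb{Z})\backslash G(\mathbb{R}))$ provided by the spectral gap (available since $G$ is semisimple), producing $(\psi_\epsilon \ast F_T)(e) = \mu(T)(1+o(1))$ for each fixed $\epsilon$. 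To remove the smoothing I would bound $|F_T(e) - (\psi_\epsilon \ast F_T)(e)|$ by the count of orbit points in the $\epsilon$-annulus around the sphere of radius $T$; because $Gw$ is Zariski closed and $\mu(T)$ grows regularly, this contributes only $O(\epsilon \cdot \mu(T))$, and the proof concludes by sending $\epsilon \to 0$ after $T \to \infty$.

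The main obstacle is obtaining the matrix-coefficient decay in a form quantitative and uniform enough to dominate boundary and cusp contributions. When $H$ has positive $\mathbb{Q}$-rank one must further separate the Eisenstein and residual parts of the spectral decomposition of $L^2(G(\mathbb{Z})\backslash G(\mathbb{R}))$ to verify that the non-constant spectrum genuinely contributes lower-order terms — this is precisely the technical heart of the approach of \cite{DRS93}.
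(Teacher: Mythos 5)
The paper does not prove this statement at all: it is quoted verbatim as background from \cite{DRS93}, so there is no internal proof to compare against. What you have sketched is a recognizable outline of the Duke--Rudnick--Sarnak / Eskin--McMullen strategy (transfer the lattice count to an integral of an automorphic kernel over $G(\mathbb{Z})\backslash G(\mathbb{R})$, unfold to identify the main term with $\mu(T)$, then show the kernel equidistributes via smoothing plus spectral input). The skeleton is right, but two load-bearing steps are asserted rather than justified. First, the claim that the $\epsilon$-annulus around $\{\|v\|\le T\}$ contributes only $O(\epsilon\,\mu(T))$ is exactly the \emph{well-roundedness} condition on the family of sets $B_T$, and it does not follow merely from $Gw$ being Zariski closed and $\mu(T)$ ``growing regularly''; this has to be verified for the specific norm balls (it is a genuine hypothesis in both \cite{DRS93} and \cite{EM93}, checked there using the explicit structure of the orbit). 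Second, the appeal to ``spectral gap / matrix-coefficient decay'' is where the symmetric-space hypothesis actually enters: DRS use the involution to control the relevant spherical functions and the growth of the period integral over $H(\mathbb{R})/H(\mathbb{Z})$ inside $G(\mathbb{Z})\backslash G(\mathbb{R})$; your write-up never invokes the involution, so as written it silently assumes the stronger EM-style mixing input without explaining how the continuous (Eisenstein) spectrum and the $H$-period are tamed. Flagging these two points and deciding whether you are following the DRS spectral route or the EM mixing route would turn this from a plausible outline into a proof sketch.
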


In the above theorem, the symmetric condition that $H$ consists of the fixed points of an involution plays a central role. In 1996 and 1997, \cite{EMS96} and \cite{EMS97} generalized the result to the cases that the stabilizer $H$ is not contained in any proper parabolic $\mathbb{Q}-$subgroup of $G$.

Let us focus on an example which we will further investigate in this paper. Consider $V_{n,k}:=\{A\in \mathrm{Mat}_{n\times n}(\mathbb{R})|\det(A)=k\}$. Take $G:=SL_n
\times SL_n$ with an action on $V_{n,k}$ given by $(g_1,g_2)x=g_1xg_2^{-1}$. Notice that $SL_n(\mathbb{R})/SL_n(\mathbb{Z})$ is of finite volume. Thus all the finiteness conditions hold. Over $\mathbb{R}$, the action is transitive on $V_{n,k}$. For any $x\in V_{n,k}$, the stabilizer is $H:=\{(xgx^{-1},g)|g\in SL_n(\mathbb{R})\}$. Consider the involution $\sigma(g_1,g_2)=(xg_2x^{-1},x^{-1}g_1x)$ on $G(\mathbb{R})$. Then $H$ consists of points fixed by $\sigma$. Thus we can take arbitrary $x\in V_{n,k}(\mathbb{Z})$ to be our $\vec{w}$ in the above theorem. By taking the union of all $SL_n(\mathbb{Z})\times SL_n(\mathbb{Z})-$orbits, we have the following asymptotic behavior as Example 1.6 in \cite{DRS93} shows:

\[N(T,V_{n,k})\sim c_{n,k}T^{n^2-n}, \text{as }T\text{ tends to infinity}\] where, if $k=\underset{1\leq i\leq r}{\Pi}p_i^{a_i}$, then \[c_{n,k}=\frac{\pi^{n^2/2}}{\Gamma(\frac{n^2-n+2}{2})\Gamma(\frac{n}{2})\underset{2\leq j\leq n}{\Pi}\zeta(j)}k^{1-n}\underset{1\leq j\leq r}{\prod}\underset{1\leq i\leq n-1}{\prod}\frac{p_j^{a_j+i}-1}{p_j^{i}-1}.\] 

In fact, the ratio between distributions of two $SL_n(\mathbb{Z})\times SL_n(\mathbb{Z})$-orbits is proportional to the ratio between the number of $SL_n(\mathbb{Z})-$orbits with respect to left (right) multiplication contained in them. Here is a concrete example:
\begin{example}
Take $n=2$ and $k=4$. There are only two $SL_n(\mathbb{Z})\times SL_n(\mathbb{Z})$-orbits, with representatives $\begin{pmatrix}2&0\\0&2\end{pmatrix}$ and $\begin{pmatrix}1&0\\0&4\end{pmatrix}$. We will denote the first one as $O_1$ and the latter one as $O_2$. We will find $N(T,O_1)/N(T,O_2)\to 1/6,  \text{as }T\text{ tends to infinity}$. 

On the other hand, there are seven $SL_n(\mathbb{Z})$-orbits with respect to left multiplication with representatives:
$\begin{pmatrix}2 &0\\0&2\end{pmatrix}\in O_1$ and $\begin{pmatrix}2 &1\\0&2\end{pmatrix}$,$\begin{pmatrix}1 &0\\0&4\end{pmatrix}$,$\begin{pmatrix}1 &1\\0&4\end{pmatrix}$, $\begin{pmatrix}1 &2\\0&4\end{pmatrix}$, $\begin{pmatrix}1 &3\\0&4\end{pmatrix}$, $\begin{pmatrix}4 &0\\0&1\end{pmatrix}\in O_2$.\end{example}

So there arises a natural question: 

\begin{question}
    For any two $SL_n(\mathbb{Z})$-orbit $O_1,O_2\subset V_{n,k}$, do we have the following asymptotic behavior:
\[N(T,O_1)\sim N(T,O_2), \text{ as }T\text{ tends to infinity?}\]
\end{question}
\begin{remark}
Notice that the theorem stated above does not apply to this case because the symmetric subgroup condition is not satisfied. The stabilizer here is a trivial group. And every automorphism of $SL_n(\mathbb{R})$ will preserve the center and 2-torsion elements. Thus every automorphism will preserve the set $\{I_n,-I_n\}$ which means that the stabilizer cannot be realized as fixed points of involution. \cite{EMS96} and \cite{EMS97} cannot be applied as well because the trivial group will be naturally contained in any proper parabolic $\mathbb{Q}-$subgroup.
\end{remark}

It's common to compare results over $\mathbb{Z}$ with results over $\mathbb{F}_q[x]$\ (here are some papers related to counting of points on homogeneous spaces over function fields \cite{FAILAI2002142} and \cite{THUNDER20082973}). To imitate the Euclidean norm, we are going to take the norm given by the valuation: $\deg$. 

Here are some reasons. These are the unique metrics such that $\mathbb{Z}$\ (resp. $\mathbb{F}_q[x]$) are discrete with respect to the corresponding completion. Thus they are the unique metrics for us to add restrictions to get a reasonable counting problem. 

After moving to $\mathbb{F}_q[x]$ in the rest, we will get a stronger result than what we conjecture above over $\mathbb{Z}$. The main reason is that $\deg$ is a non-archimedean valuation. We will also go back to $\mathbb{C}$ to give some geometric meaning of our results\ (Remark~\ref{geometric}).
\section{Main Results}

From now on, fix a finite field $\mathbb{F}_q$.
\begin{definition}
Given a matrix $M\in \mathrm{Mat}_{l\times n}(\mathbb{F}_q[x])$ and a natural number $k$, we say that $M$ satisfies property $\property{k}$, if all the entries of $M$ are of degree smaller or equal to $k$.
\end{definition}
Here is the equidistribution theorem we are going to prove:
\begin{theorem}\label{main1}
Suppose we are given an $M\in \mathrm{Mat}_{n\times n}(\mathbb{F}_q[x])$ such that $\det(M)\neq 0$ and $\deg(\det(M))=t$. Then for any $k\geq t$ we have 
\[\#\left\{g\in GL_n(\mathbb{F}_q[x])M:g\text{ satisfies }\property{k} \right\}=\#GL_n(\mathbb{F}_q) q^{(n-1)(nk-t)}=(q^{k})^{n^2-n} C_{n,t}\]
where $C_{n,t}=\#GL_n(\mathbb{F}_q)(q^{t})^{1-n}.$
\end{theorem}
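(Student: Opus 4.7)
The plan is as follows. Since $\det M \neq 0$, the map $g \mapsto gM$ is a bijection from $GL_n(\mathbb{F}_q[x])$ onto the orbit; so the desired count equals the number of $g \in GL_n(\mathbb{F}_q[x])$ with $gM$ satisfying \property{k}. Using the row Hermite normal form over the PID $\mathbb{F}_q[x]$, I may replace $M$ with an HNF representative of its left orbit, so that without loss of generality $M$ is upper triangular with monic diagonal entries $d_1, \ldots, d_n$ of degrees $t_1, \ldots, t_n$ summing to $t$, with superdiagonal entries reduced modulo the pivots.

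Next, I would reformulate in lattice-theoretic terms. The row lattice $L := \mathbb{F}_q[x]^n \cdot M$ is a free $\mathbb{F}_q[x]$-submodule of rank $n$ in $\mathbb{F}_q[x]^n$ of index $q^t$. The orbit consists precisely of those matrices whose rows form an ordered $\mathbb{F}_q[x]$-basis of $L$. Setting $V_k := \{v \in \mathbb{F}_q[x]^n : \deg v_j \leq k \text{ for all } j\}$, the count becomes the number of ordered bases of $L$ contained in $V_k^n$. A preliminary direct calculation using the HNF structure shows that for $k \geq t$ the $\mathbb{F}_q$-linear map $V_k \twoheadrightarrow \mathbb{F}_q[x]^n / L$ is surjective, yielding $\dim_{\mathbb{F}_q}(L \cap V_k) = n(k+1) - t$.

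Finally, I proceed by induction on $n$. The case $n = 1$ is immediate: the orbit is $\{cM : c \in \mathbb{F}_q^*\}$, of cardinality $q - 1 = \# GL_1(\mathbb{F}_q)$, matching the formula. For the inductive step, I pick the last basis row $r_n \in L \cap V_k$ first, requiring $r_n$ to be primitive in $L$ so that $L / \mathbb{F}_q[x]\, r_n$ is free of rank $n-1$. The remaining rows $r_1, \ldots, r_{n-1}$ must then project to an ordered $\mathbb{F}_q[x]$-basis of this rank-$(n-1)$ quotient (to which the inductive hypothesis will apply), and counting lifts back into $V_k$ introduces a further factor equal to the size of $(\mathbb{F}_q[x]\, r_n) \cap V_k$.

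The main obstacle is the careful bookkeeping in the inductive step: both the effective degree bound and the effective co-volume after quotienting by $r_n$ depend on $r_n$ itself (through its sup-norm degree $\max_j \deg(r_n)_j$ and the way $\mathbb{F}_q[x]\, r_n$ sits inside $L$). Verifying that the $r_n$-dependent contributions, when summed over all primitive $r_n \in L \cap V_k$, telescope to the closed form $\# GL_n(\mathbb{F}_q) \cdot q^{(n-1)(nk-t)}$ is the crux of the argument. I expect the clean cancellation to rely on a M\"obius-inversion count of primitive elements in $L \cap V_k$, combined with the formula for $\dim_{\mathbb{F}_q}(L \cap V_k)$ from the previous step.
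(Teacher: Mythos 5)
Your lattice reformulation is correct and gives a clean starting point: the orbit count does equal the number of ordered $\mathbb{F}_q[x]$-bases of the row lattice $L$ lying in $V_k^n$, and the dimension count $\dim_{\mathbb{F}_q}(L\cap V_k)=n(k+1)-t$ for $k\ge t$ can indeed be verified directly from the Hermite form (the surjectivity of $V_k\twoheadrightarrow \mathbb{F}_q[x]^n/L$ follows by back-substitution since each diagonal degree $t_i$ is at most $t$). The base case $n=1$ is also fine.

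The inductive step, however, has a genuine structural gap that you flag but do not resolve. After choosing a primitive $r_n\in L\cap V_k$, the remaining rows must (a) each lie in $L\cap V_k$ and (b) project to an ordered basis of $L'=L/\mathbb{F}_q[x]r_n$. Your inductive hypothesis, though, is a statement about bases of a sublattice of $\mathbb{F}_q[x]^{n-1}$ lying in a degree box $V_{k'}^{n-1}$; it does not apply to $L'$ as it stands. Two things go wrong: first, $L'$ lives inside $\mathbb{F}_q[x]^n/\mathbb{F}_q[x]r_n$, which need not be free (since $r_n$ is primitive in $L$ but generally not primitive in $\mathbb{F}_q[x]^n$), so there is no canonical identification with $\mathbb{F}_q[x]^{n-1}$; second, even after choosing such an identification, the image of $L\cap V_k$ under $\pi\colon L\to L'$ is some subset that depends on $r_n$ and is typically not a coordinate box $V_{k'}$, so the hypothesis does not match. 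The asserted ``telescoping'' over all primitive $r_n$, and the M\"obius inversion meant to supply it, are the entire content of the argument and are not carried out. Until the image of $L\cap V_k$ in $L'$ is given a usable structure (or the statement is strengthened to a form stable under this kind of quotienting), the induction does not close.

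For comparison, the paper avoids these issues entirely by a different two-step reduction. First, Lemma~3.1 (proved by a delicate induction using truncation operators and the ultrametric inequality) shows that, without changing the count, one may replace the Hermite form by a genuinely diagonal $M=\mathrm{diag}(d_1,\dots,d_n)$. Once $M$ is diagonal, the condition $gM$ satisfies $\property{k}$ becomes a per-column degree bound $\deg(g_{i,j})\le k-t_j$ on $g\in GL_n(\mathbb{F}_q[x])$; after quotienting by the left $GL_n(\mathbb{F}_q)$ action on the constant term, Lemma~2.7 computes the remaining count by a double induction on $n$ and $\sum k_i$, using the auxiliary families $Q^i$ and $R^i$ of matrices with prescribed linear dependence among top-degree column coefficients. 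The paper's route trades your lattice picture for a purely coordinate computation, but in exchange the inductive bookkeeping is entirely explicit.
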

\begin{remark}
    The conclusion of the theorem may fail if one omits the condition $k\geq t$. Suppose that $M$ is of the form $\mathrm{diag}(f,I_{n-1})$ where $\deg(f)=t$. The left hand side is zero when $k<t$ whereas the right hand side is never zero.
\end{remark}
\begin{corollary}
    Keeping all the assumptions in the above theorem, we have 
    \[\#\left\{g\in SL_n(\mathbb{F}_q[x])M:g\text{ satisfies }\property{k} \right\}=\#SL_n(\mathbb{F}_q) q^{(n-1)(nk-t)}.\]
\end{corollary}
\begin{proof}
Notice that $GL_n(\mathbb{F}_q)=SL_n(\mathbb{F}_q)\times \mathbb{F}_q^\times I_n$. So we know that 
\[\#GL_n(\mathbb{F}_q) q^{(n-1)(nk-t)}=(q-1)\#SL_n(\mathbb{F}_q) q^{(n-1)(nk-t)}.\]
Also, notice that for any $X\in GL_n(\mathbb{F}_q[x])M$ satisfying \property{k}, there exists a unique $a_X\in \mathbb{F}_q^\times$, which is the constant term of $\det(X)$ such that $a_X^{-1} X\in SL_n(\mathbb{F}_q[x])$. So we know that 
\[\#\left\{g\in GL_n(\mathbb{F}_q[x])M:g\text{ satisfies }\property{k} \right\}=(q-1)\#\left\{g\in SL_n(\mathbb{F}_q[x])M:g\text{ satisfies }\property{k} \right\}.\]
Combining the above two identities and the identity in the theorem, we will get the statement in the corollary.
\end{proof}
\begin{corollary}
For $k \geq t \geq 0$, we have
\begin{align*}
&\#\left\{g \in \mathrm{Mat}_{n \times n}(\mathbb{F}_q[x]) :  \deg(\det(g)) = t, \text{ and } g \text{ satisfies }\property{k} \right\} \\
=& (q^{k})^{n^2-n} p_{n,t} C_{n,t},
\end{align*}
where $p_{n,t} = \sum_{t_1 + t_2 + \cdots + t_n = t} q^{t_1 + 2t_2 + \cdots + nt_n}$.
\end{corollary}

\begin{remark}
The product $p_{n,t}C_{n,t}$ is analogous to the constant $c_{n,k}$.
\end{remark}

\begin{proof}
Let's introduce a lemma first.
\begin{lemma}\label{tech}
Suppose we are given $M \in \mathrm{Mat}_{n \times n}(\mathbb{F}_q[x])$ where $\det(M) \neq 0$. Then we can find a unique upper-triangular matrix $B = (b_{i,j})_{1 \leq i,j \leq n} \in GL_n(\mathbb{F}_q[x])M$ such that $\deg(b_{i,j}) < \deg(b_{j,j})$ where $i < j$ and $b_{i,i}, 1 \leq i \leq n$ are monic polynomials.
\end{lemma}

By this lemma, we have the following identity
\begin{align*}
&\#\left\{g \in \mathrm{Mat}_{n \times n}(\mathbb{F}_q[x]) :  \deg(\det(g)) = t, \text{ and } g \text{ satisfies }\property{k} \right\} \\
=& \sum_{B \in S_t} \#\left\{g \in GL_n(\mathbb{F}_q[x])B :  g \text{ satisfies }\property{k} \right\},
\end{align*}
where $S_t$ is the set of upper-triangular matrices $B = (b_{i,j})_{1 \leq i,j \leq n}$ such that $\deg(\det(B)) = t$, $\deg(b_{i,j}) < \deg(b_{j,j})$ where $i < j$, and $b_{i,i}, 1 \leq i \leq n$ are monic polynomials. By Theorem~\ref{main1}, for every $B \in S_t$, we have \[\#\left\{g \in GL_n(\mathbb{F}_q[x])B :  g \text{ satisfies }\property{k} \right\} = (q^k)^{n^2-n}C_{n,t}.\] By the degree restrictions, we can easily count $\#S_T$ and find that it's $p_{n,t} = \sum_{t_1 + t_2 + \cdots + t_n = t} q^{t_1 + 2t_2 + \cdots + nt_n}$. Combining the above identities, we get the formula stated in the Corollary.

Now let's start the proof of Lemma~\ref{tech}.
\begin{proof}
Let's prove it by induction on $n$. 

When $n = 1$, the argument automatically holds. 

Suppose that $n \geq 2$. The uniqueness can be interpreted as follows: Assume that we have $B, gB \in GL_n(\mathbb{F}_q[x])M, g \in GL_n(\mathbb{F}_q[x])$ satisfying the properties. Then $g = I_n$.

Let's write $g$ blockwise as $\begin{pmatrix}g_1 & g_2 & g_3\\0 & g_4 & g_5\\0 & 0 & g_6\end{pmatrix}$, where $g_1, g_3, g_6 \in \mathbb{F}_q[x]$, $g_2^T, g_5 \in \mathbb{F}_q[x]^{n-2}$, and $g_4 \in GL_n(\mathbb{F}_q[x])$. We can use the induction hypothesis on the upper left $(n-1) \times (n-1)$ part and the downward right $(n-1) \times (n-1)$ part. Then we know that $g_1 = g_6 = 1$, $g_2^T = g_5 = \vec{0}$, and $g_4 = I_{n-2}$. Now it suffices to show that $g_3 = 0$. Notice that the $(1,n)$-entry (resp. $(n,n)$-entry) of $gB$ is now $b_{1,n} + g_3b_{n,n}$ (resp. $b_{n,n}$). By the degree restrictions, we know that $\deg(b_{1,n} + g_3b_{n,n}) < \deg(b_{n,n})$ and $\deg(b_{1,n}) < \deg(b_{n,n})$. Using the strong triangular inequality, we know that $\deg(g_3b_{n,n}) < \deg(b_{n,n})$, which implies $g_3 = 0$. 

Now let's focus on the existence. Since we are working over $\mathbb{F}_q[x]$, a principal ideal domain, we are able to find an element $p \in GL_n(\mathbb{F}_q[x])$ such that the first column of $pM$ only has nonzero entries in the first row. Denote the downward-right $(n-1) \times (n-1)$ part of $pM$ by $\tilde{M}$. We may use the induction hypothesis to find a $q \in GL_{n-1}(\mathbb{F}_q[x])$ such that $q\tilde{M}$ is upper-triangular. So we know that $\mathrm{diag}(1,q)pM \in GL_n(\mathbb{F}_q[x]M$ is upper-triangular. Now we may assume that our $M$ is upper-triangular. We only need to find $B \in GL_n(\mathbb{F}_q[x])M$ satisfying the degree restrictions. Interested readers may find a proof using left multiplication by elementary matrices step by step. We produce a simplified proof based on the induction hypothesis here. Let's write $B$ blockwise as $\begin{pmatrix}B_1 & B_2 & B_3\\0 & B_4 & B_5\\0 & 0 & B_6\end{pmatrix}$, where $B_1, B_3, B_6 \in \mathbb{F}_q[x]$, $B_2^T, B_5 \in \mathbb{F}_q[x]^{n-2}$, and $B_4 \in \mathrm{Mat}_{(n-2) \times (n-2)}(\mathbb{F}_q[x])$. Using induction hypothesis on the upper left $(n-1) \times (n-1)$ part first and then on the downward right $(n-1) \times (n-1)$ part (the order matters here), we may assume that only one of the degree restrictions $\deg(B_{1,n}) < \deg(B_{n,n})$ has not been satisfied. Since we require that $\det(M) \neq 0$, we know that $B_{n,n} \neq 0$. By the Euclidean division algorithm, we may find an element $f \in \mathbb{F}_{q}[x]$ such that $\deg(B_{1,n} + fB_{n,n}) < \deg(B_{n,n})$. Denote the elementary matrix representing adding the $n$th row multiplied by $f$ to the first row by $L(f)$. Then $L(f)B$ differs from $B$ only on the $(1,n)$th entry where the $(1,n)$th entry of $L(f)B$ is of degree strictly smaller than $\deg(B_{n,n})$. So we know that $L(k)B \in GL_n(\mathbb{F}_q[x])M$ gives us the existence. 
\end{proof}
\end{proof}

We are going to break the proof of Theorem 2.2 into two parts.

Notice that, by Lemma~\ref{tech}, we can always find an upper-triangular matrix $B\in GL_n(\mathbb{F}_q[x])M$. Now we may assume $M$ is upper-triangular. In fact, we are able to assume that $M$ is diagonal by taking $l=n$ in the following lemma.
\begin{lemma}\label{lemma1}
Suppose we are given two positive integers $1\leq l\leq n$ and an upper-triangular matrix $M=\begin{pmatrix}D & *\\ 0 &*\end{pmatrix}\in \mathrm{Mat}_{n\times n}(\mathbb{F}_q[x]),D\in \mathrm{Mat}_{l\times l}(\mathbb{F}_q[x])$.  Then,  there exists an upper-triangular $M'=\begin{pmatrix}D' & *\\ 0 &*\end{pmatrix}\in \mathrm{Mat}_{n\times n}(\mathbb{F}_q[x])$ such that $D'\in \mathrm{Mat}_{l\times l}(\mathbb{F}_q[x])$ is a diagonal matrix, $\deg(\det(D'))\geq \deg(\det(D))$, $\deg(\det(M'))=\deg(\det(M))$ and for any $k\geq 0$, we have
\[\#\{g\in GL_{n}(\mathbb{F}_q[x]):gM\text{ satisfies } \property{k}\}=\#\{g\in GL_{n}(\mathbb{F}_q[x]):gM'\text{ satisfies } \property{k}\}.\]
\end{lemma}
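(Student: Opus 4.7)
The plan is to prove the lemma by induction on $l$. For the base case $l=1$, the block $D$ is $1 \times 1$ and already diagonal, so one may take $M' = M$. For the inductive step, assume the lemma for all smaller values of the parameter. I would first apply the inductive hypothesis with parameter $l-1$ to the given $M$; this replaces $M$ by an upper-triangular matrix $M''$ whose top-left $(l-1) \times (l-1)$ sub-block is already diagonal, with the same counting invariant, with $\deg\det D_0'' \geq \deg\det D_0$ (writing $D_0, D_0''$ for the respective top-left $(l-1)\times(l-1)$ sub-blocks) and with $\deg\det M'' = \deg\det M$. After this reduction, the only off-diagonal entries of the top-left $l \times l$ block of $M''$ lie in its last column, and the problem reduces to eliminating these.

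The central structural observation is a column-by-column parametrization of the admissible $g$'s. Since $M$ is upper-triangular, $(gM)_{ij} = \sum_{k \leq j} g_{ik} M_{kj}$, so for fixed previously chosen $g_{i1}, \ldots, g_{i,j-1}$ the constraint $\deg(gM)_{ij} \leq k$ is a congruence on $g_{ij}$ modulo $M_{jj}$, with exactly $q^{k - \deg M_{jj} + 1}$ solutions (assuming $k \geq \deg M_{jj}$, which follows from $k \geq t$). This count is independent both of the off-diagonal entries of $M$ and of the previously chosen $g_{ik}$'s, so the total number of $g \in Mat_{n \times n}(\mathbb{F}_q[x])$ with $gM$ satisfying \property{k} equals $q^{n(n(k+1) - t)}$ and depends only on $k$ and the multiset of diagonal degrees of $M$. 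In particular, this already gives equality of the ``pre-determinant'' counts for $M$ and for any upper-triangular $M'$ whose diagonal degrees sum to the same total $t$, and reading off the bijection of tuples one obtains a natural correspondence $g_{ij} \mapsto g_{ij} + \lfloor (\sum_{k<j} g_{ik} M_{kj})/M_{jj} \rfloor$ between valid tuples for $M$ and for the diagonal $M'$ obtained by dropping all off-diagonal entries.

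The main obstacle will be promoting this to a bijection that also respects the condition $\det g \in \mathbb{F}_q^*$, since the lemma counts only invertible matrices $g \in GL_n(\mathbb{F}_q[x])$. The correction map above is $\mathbb{F}_q$-linear in the coefficients of $g$, but the floor operation prevents it from being $\mathbb{F}_q[x]$-linear, and a short direct computation shows that $\det g$ can change under the bijection by a term involving $g_{11}\lfloor g_{21}e/d_2 \rfloor - g_{21}\lfloor g_{11}e/d_2 \rfloor$-type expressions. To handle this I would exploit the flexibility afforded by the hypothesis $\deg\det D' \geq \deg\det D$ together with $\deg\det M' = \deg\det M$: one is free to transfer some degree from the lower-right block into the diagonal entries of $D'$, and one is free to choose the entries of $M'$ above the block diagonal arbitrarily. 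Choosing the new diagonal entries $d_j'$ of $D'$ to be large enough so that $d_j'$ divides the relevant off-diagonal entries makes the quotients $\lfloor (\sum_{k<j} g_{ik} M_{kj})/d_j' \rfloor$ into genuine $\mathbb{F}_q[x]$-linear combinations of the earlier $g_{ik}$'s; the corresponding transformation of $g$ becomes a unipotent upper-triangular column operation, which preserves $\det g$ on the nose. Carrying out this compensation carefully, and verifying that it yields a genuine bijection on the determinant-constrained sets, is the technical heart of the argument.
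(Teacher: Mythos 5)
Your opening observation—the column-by-column parametrization showing that the number of \emph{arbitrary} $g\in Mat_{n\times n}(\mathbb{F}_q[x])$ with $gM$ satisfying $\property{k}$ is $q^{n(n(k+1)-t)}$, depending only on the diagonal degrees of the upper-triangular $M$—is correct, clean, and genuinely different from anything in the paper. You are also right that when every diagonal entry $d_j$ of $M$ divides the entries above it in column $j$, one can write $M=U\,\mathrm{diag}(d_1,\dots,d_n)$ with $U$ unipotent upper-triangular, so that $g\mapsto gU$ is a determinant-preserving bijection of the $\property{k}$-sets. And you have put your finger on the real obstruction: the floor map is only $\mathbb{F}_q$-linear, not $\mathbb{F}_q[x]$-linear, so it does not respect $\det g\in\mathbb{F}_q^\times$.

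The gap is in the proposed fix, which as described cannot work. Any determinant-preserving column operation $g\mapsto gU$, $U\in GL_n(\mathbb{F}_q[x])$, that carries $\{g:gM\text{ satisfies }\property{k}\}$ onto $\{g:gM'\text{ satisfies }\property{k}\}$ forces $M=UM'$, i.e.\ it forces $M$ and $M'$ to lie in the same left $GL_n(\mathbb{F}_q[x])$-orbit. But Lemma~\ref{lemma1} genuinely compares counts across \emph{different} orbits: already for $n=l=2$ and $M=\left(\begin{smallmatrix}1&x\\0&x^2\end{smallmatrix}\right)$, the matrix $M$ is its own Hermite normal form and is not left-equivalent to any diagonal matrix, yet the lemma asserts its count agrees with that of some diagonal $M'$. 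So what is needed is an equality of cardinalities, not a bijection, and your sketch supplies no mechanism for that. Two further points compound the problem: (i) your bijection formula $g_{ij}\mapsto g_{ij}+\lfloor(\sum_{k<j}g_{ik}M_{kj})/d_j'\rfloor$ mixes entries of the old $M$ with the new diagonal $d_j'$ and does not define a map between the two constraint sets unless $M$ itself already has the divisibility structure, which is what one is trying to arrange; and (ii) the entries of $M'$ above the block diagonal are \emph{not} free—changing them changes the set being counted, so they must be controlled. The paper achieves the cardinality equality by a different route: it shows, using the non-archimedean triangle inequality, that truncating off-diagonal entries below a degree threshold leaves the constraint set literally unchanged, and then runs an extremal argument—choosing among all admissible reductions the one maximizing $\sum_i\deg d_i$, conjugating by the transposition $(1\ l_0)$, and deriving a contradiction with maximality if the top-left block has not yet become diagonal. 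Nothing in your sketch substitutes for this extremal step, which is where the real content of the lemma lies.
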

\begin{remark}
We only require an identity on the size of the sets instead of an identity of the sets. The equality of the sets does not hold in general.
\end{remark}
From now on, we may assume $M$ is a diagonal matrix. Notice that when $M$ is a diagonal matrix, we only care for the degree of its entries on the diagonal. So we are reduced to calculating the size of the following sets:

\[\left\{A=(a_{i,j})\in GL_n(\mathbb{F}_q[x]):\deg(a_{i,j})\leq k-t_j,\forall  1\leq i,j\leq n \right\}\]
where $t_i$ is the degree of the $(i,i)$th entry of the diagonal matrix and $t_1+t_2+\cdots+t_n=t$.

For any $A\in \mathrm{Mat}_{n\times n}(\mathbb{F}_q[x])$, we may write it as $A=\sum_{d\geq 0} A_d x^d$ where $A_d\in \mathrm{Mat}_{n\times n}(\mathbb{F}_q)$. If $A$ is invertible, we know that $A_0\in GL_n(\mathbb{F}_q)$ is invertible. Notice that left multiplication by $GL_n(\mathbb{F}_q)$ stabilizes the set which we are counting and leaves $\property{k}$ invariant. 
So it suffices to count those with $A_0=I_n$.
We utilize the following lemma:
\begin{lemma}\label{lemma2}
For all $k_j\geq 0,1\leq j\leq n$, we have
\begin{equation}\label{lemma2id}\#\left\{A=(a_{i,j})\in GL_n(\mathbb{F}_q[x]):\deg(a_{i,j})\leq k_j,\forall  1\leq i,j\leq n;A_0=I_n \right\}=q^{(n-1)\underset{1\leq i\leq n}{\sum}k_i}.\end{equation}
\end{lemma}
\begin{remark}\label{geometric}
In fact, the set on the left side of (\ref{lemma2id}) can be viewed as the $\mathbb{F}_q$-points of a scheme defined over $\mathbb{Z}$. This lemma tells us this scheme is strong polynomial count over $\mathbb{Z}$ (for definition, see the appendix of \cite{HRV08}).  In an answer (\cite{onlineAns} ) posted on mathoverflow by Will Sawin, he proved that the \'{e}tale cohomology of any closed, scaling-invariant subset of $\mathbb{A}^n$ containing the origin is $\mathbb{Q}_l$ in degree $0$. To establish (\ref{lemma2id}) using the Grothendieck trace formula , it is essential to demonstrate that the variety we defined is rationally smooth of dimension $(n-1)\underset{1\leq i\leq n}{\sum}k_i$. When we take $k_i=1,\forall 1\leq i\leq n$, the left side actually corresponds to nilpotent cone which was proved rationally smooth in \cite{BM83}. One could try and pursue Lemma~\ref{lemma2} using algebraic geometry, but we only give a completely elementary proof here.
\end{remark}

The organization of this paper is as follows. We shall prove Lemma~\ref{lemma1} in Section~\ref{section1}. Then we will prove Lemma~\ref{lemma2} in Section~\ref{section2}.

\section{Proof of Lemma~\ref{lemma1}}\label{section1}
We are going to prove this lemma by induction on $l$.

Notice that the base case $l=1$ is trivial, since every upper-triangular matrix $M$ satisfies the condition itself.

Now let's deal with the induction part. Suppose the lemma is proved in the case that $l=l_0-1,l_0\geq 2$. We are now attacking the case that $l=l_0$.

Suppose the original upper-triangular matrix is \[M_0=\begin{pmatrix}D_0&*\\ 0 &*\end{pmatrix}\in \mathrm{Mat}_{n\times n}(\mathbb{F}_q[x]),\det(M_0)\neq 0, D_0\in Mat_{(l_0-1)\times (l_0-1)}(\mathbb{F}_q[x]).\]
Suppose our induction step for $M_0$ fails.

Firstly, we are going to prove the following technical lemma:
\begin{lemma}
For the above $M_0$, the following set:
\[S_{M_0}:=\left\{M\in \mathrm{Mat}_{n\times n}(\mathbb{F}_q[x]):M=(m_{i,j})_{1\leq i,j\leq n}\text{ satisfies the following conditions}\right\}\] is nonempty where the conditions are:
\begin{equation}\label{equation1}\begin{split}&M=\begin{pmatrix}D & * \\0&*\end{pmatrix}, D=\mathrm{diag}(d_1,d_2,\ldots,d_{l_0-1}) \\ &\deg(\det(D))\geq \deg(\det(D_0)),\deg(\det(M))=\deg(\det(M_0))\end{split}\end{equation}
\begin{equation}\label{condition0}
    \#\{g\in GL_{n}(\mathbb{F}_q[x]):gM_0\text{ satisfies } \property{k}\}=\#\{g\in GL_{n}(\mathbb{F}_q[x]):gM\text{ satisfies } \property{k}\}
\end{equation}
\begin{equation}\label{equation2}
\deg(d_1)\leq \deg(d_2)\leq\cdots \leq \deg(d_{l_0-1})\end{equation}

\begin{equation}\label{equation3}
x^{\deg(d_i)+1}\text{ divides the }(i,l_0)\text{th entry of }M,\forall 1\leq i\leq l_0-1
\end{equation}
\begin{equation}\label{identity1}\deg(m_{i,l_0})+1\leq \deg(m_{l_0,l_0}),\forall 1\leq i<l_0 \end{equation}
\begin{equation}\label{assumption0}
    x^{\deg(d_1)+1}\text{ divides the }l_0\text{th column of }M
\end{equation}
\end{lemma}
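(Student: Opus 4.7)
I would build an element $M\in S_{M_0}$ from $M_0$ through a sequence of transformations, each preserving the counting identity (\ref{condition0}). After possibly applying the induction hypothesis for $l=l_0-1$ to replace $M_0$ by an orbit representative whose top-left $(l_0-1)\times(l_0-1)$ block is already diagonal (while not decreasing $\deg(\det D_0)$), I would first conjugate by a block permutation matrix $\tilde P=\mathrm{diag}(P,I_{n-l_0+1})$, where $P$ sorts $d_1,\ldots,d_{l_0-1}$ into nondecreasing order of degree, establishing (\ref{equation2}). Counting is preserved because conjugation by $\tilde P$ amounts, after reparametrization $g\mapsto g'\tilde P$, to a right multiplication by $\tilde P^{-1}$ of $gM$, i.e.\ a permutation of columns, which leaves $\property{k}$ invariant.

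Next, for each $i$ with $1\leq i<l_0$, I would subtract a suitable polynomial multiple of row $l_0$ from row $i$ to reduce $m_{i,l_0}$ modulo $m_{l_0,l_0}$, obtaining (\ref{identity1}). Since row $l_0$ has zeros in positions $1,\ldots,l_0-1$ by upper-triangularity, the diagonal structure of the top-left block is preserved, and the operation is left multiplication by an element of $GL_n(\mathbb{F}_q[x])$, so the orbit and the counting identity are unchanged.

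Finally, to obtain (\ref{equation3}) and (\ref{assumption0}), I would take $M$ to be extremal within the class of matrices satisfying (\ref{equation1})--(\ref{identity1}), specifically one maximizing $\deg(\det D)$ (equivalently, minimizing $\deg m_{l_0,l_0}$ among upper-triangular representatives, since $\det M$ is fixed). Assume for contradiction that (\ref{equation3}) fails for some $i_0$, so the $x$-adic valuation of $m_{i_0,l_0}$ is at most $\deg d_{i_0}$. Then I would exhibit a further elementary transformation in $GL_n(\mathbb{F}_q[x])$, possibly combined with a permutation among the first $l_0$ rows and columns, that either yields a fully diagonal top-left $l_0\times l_0$ block---contradicting the assumed failure of the induction step for $M_0$---or produces an orbit representative with strictly larger $\deg(\det D)$, contradicting the extremal choice of $M$. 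A parallel argument handles (\ref{assumption0}) via the entry with smallest $x$-adic valuation in column $l_0$.

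The hard part is this last step: identifying the precise elementary transformation that converts the low $x$-adic valuation of $m_{i_0,l_0}$ into either a genuine reduction of that entry (with suitable compensation in the off-diagonal blocks) or an enlargement of $\deg(\det D)$, while simultaneously preserving the block form $\begin{pmatrix}D & *\\ 0 & *\end{pmatrix}$, the ordering of the $\deg d_i$, and the bound (\ref{identity1}). The interplay between the polynomial degree (valuation at $\infty$) and the $x$-adic valuation (valuation at $0$) is the technical crux, and the whole use of the hypothesis that the induction step fails for $M_0$ is to rule out the possibility of a fully diagonal extension.
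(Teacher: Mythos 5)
Your opening moves (invoke the induction hypothesis, conjugate by a permutation matrix inside $GL_n(\mathbb{F}_q)$ to sort the diagonal block by degree, and reduce the $(i,l_0)$ entries modulo $m_{l_0,l_0}$ to get (\ref{identity1})) match the paper's, but the core of the lemma --- obtaining (\ref{equation3}) and (\ref{assumption0}) while keeping (\ref{condition0}) --- is precisely the part you leave open, and the route you sketch does not lead there.

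The paper establishes (\ref{equation3}) and (\ref{assumption0}) by explicit truncation, not by elementary transformations inside the orbit. Concretely, it replaces $m_{i,l_0}$ by $m_{i,l_0}-T_{\deg d_i}(m_{i,l_0})$ and later $m_{l_0,l_0}$ by $m_{l_0,l_0}-T_{\deg d_1+1}(m_{l_0,l_0})$, where $T_u$ keeps only the terms of degree $\leq u$. These moves take $M$ \emph{outside} the $GL_n(\mathbb{F}_q[x])$-orbit, so your plan of ``exhibit a further elementary transformation in $GL_n(\mathbb{F}_q[x])$'' cannot reproduce them. What makes the truncations admissible is that the paper proves equality (not just equicardinality) of the counting sets $\{g:gM\ \text{satisfies}\ \property{k}\}$ and $\{g:gM'\ \text{satisfies}\ \property{k}\}$, using the non-archimedean property of $\deg$: if every $\deg(f_i)+\deg(d_i)\leq k$, then $\deg\bigl(\sum_i f_i\,T_{\deg d_i}(m_{i,l_0})\bigr)\leq\max_i\{\deg f_i+\deg d_i\}\leq k$, so the discarded low-degree tails cannot push any entry of $gM$ above degree $k$. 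This is the ``technical crux'' you correctly identify but do not supply; the interplay you point to between degree and $x$-adic valuation is resolved exactly by the observation that a truncation in $x$-adic valuation is small in the $\deg$ valuation, and the $\deg$ valuation is the one governing $\property{k}$.

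A second, structural issue: the extremal argument you propose (maximize $\deg(\det D)$, then derive a contradiction from a strictly larger representative) is not how the paper proves this lemma. It is how the paper uses the lemma afterwards, in the main induction step, where it picks $M_{\max}\in S_{M_0}$ maximizing $\sum\deg d_i$, conjugates by the transposition $(1\ l_0)$, re-triangularizes to some $N$, and shows $\deg(\det D_N)>\deg(\det D_0)$ to contradict maximality. Folding that outer argument into the proof of the lemma both circularizes the logic (you would be invoking $S_{M_0}\neq\varnothing$ to prove $S_{M_0}\neq\varnothing$) and leaves the actual nonemptiness unproved. So the gap is genuine: the missing step is the truncation-plus-non-archimedean argument, and no amount of within-orbit elementary row operations or extremality will substitute for it.
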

\begin{proof}
By the induction hypothesis, we may take an upper-triangular matrix $M=(M_{i,j})_{1\leq i,j\leq n}$ which satisfies conditions (\ref{equation1}) and (\ref{condition0}).

Notice that conjugation by a matrix in $GL_n(\mathbb{F}_q)$ does not affect the property $\property{k}$ for any $k$. Thus, for any $h\in GL_n(\mathbb{F}_q)$ we have 
\begin{align*}&\#\{g\in GL_{n}(\mathbb{F}_q[x]):gM\text{ satisfies } \property{k}\}\\=&\#\{g\in GL_{n}(\mathbb{F}_q[x]):hgMh^{-1}\text{ satisfies } \property{k}\}\\=&\#\{g\in GL_{n}(\mathbb{F}_q[x]):hg h^{-1}hMh^{-1}\text{ satisfies } \property{k}\}\\=&\#\{g\in GL_{n}(\mathbb{F}_q[x]):g hMh^{-1}\text{ satisfies } \property{k}\}.\end{align*}

So we may choose $h$ to be a permutation matrix which only permutes the elements of $D$ to further assume that $M$ also satisfies the condition (\ref{equation2})\ (notice that this step does not change the condition~(\ref{equation1}) and (\ref{condition0})).

For a polynomial $f=\underset{i\geq 0}{\sum} a_i x^i\in \mathbb{F}_q[x]$ and $u>0$, we define \[T_u(f):=\underset{0\leq i\leq u}{\sum} a_i x^i.\]

To produce a matrix satisfying the remaining conditions, take $M'=(m'_{i,j})_{1\leq i,j\leq n}\in \mathrm{Mat}_{n\times n}(\mathbb{F}_q[x])$ such that
\[m'_{i,l_0}=m_{i,l_0}-T_{\deg(d_i)}(m_{i,l_0}),1\leq i\leq l_0-1;\]\[m'_{i,j}=m_{i,j},\text{ otherwise}.\]

Notice that $M'$ is still an upper-triangular matrix satisfying conditions~(\ref{equation1}) and (\ref{equation2}). And, by the construction,  $M'$ satisfies the condition (\ref{equation3}).

We are going to prove that
\begin{equation}\label{condition3}\{g\in GL_{n}(\mathbb{F}_q[x]):gM\text{ satisfies } \property{k}\}=\{g\in GL_{n}(\mathbb{F}_q[x]):gM'\text{ satisfies } \property{k}\}\end{equation}

This will imply that $M'$ also satisfies condition~(\ref{condition0}).

Denote the $i$th column vector of $M$\ (resp. $M'$) by $\alpha_i$\ (resp. $\beta_i$). Notice that we only change the $l_0$th column. 
So we will only change the $l_0$th column of $gM$. 
Thus if the identity$~(\ref{condition3})$ fails, there exists a vector
$\gamma\in \mathbb{F}_q[x]^n$\ (where $\gamma^T$
is some row vector of an element $g\in GL_n(\mathbb{F}_q[x])$) 
such that either:
\[\deg(\gamma^T\alpha_{l_0})\leq k,\deg(\gamma^T\beta_{l_0})>k\text{ and }\deg(\gamma^T\alpha_{i}) \leq k,\forall i<l_0; \]
or
\[\deg(\gamma^T\alpha_{l_0})> k,\deg(\gamma^T\beta_{l_0})\leq k\text{ and }\deg(\gamma^T\alpha_{i}) \leq k,\forall i<l_0.\]

In both cases, we have 
\[\deg(\gamma^T(\alpha_{l_0}-\beta_{l_0}))>k\text{ and }\deg(\gamma^T\alpha_{i}) \leq k,\forall i<l_0\]

Suppose the first $l_0-1$ entries of $\gamma$ are $f_1,\ldots, f_{l_0-1}$. Expanding the above inequalities in detail, we get that 
\[
\deg(\underset{1\leq i\leq l_0-1}{\sum}f_iT_{\deg(d_i)}(m_{i,l_0}))>k ;
\]
\[\deg(f_i)+\deg(d_i) \leq k, \forall 1\leq i\leq l_0-1.\]

By the latter one and the non-archimedean property of $\deg$, we have:
\begin{align*}
&\deg(\underset{1\leq i\leq l_0-1}{\sum}f_iT_{\deg(d_i)}(m_{i,l_0}))\\\leq& \max\{\deg(f_iT_{\deg(d_i)}(m_{i,l_0})), 1\leq i\leq l_0-1\}\\\leq&\max\{\deg(f_i)+\deg(d_i),1\leq i\leq l_0-1\}\\\leq & k
\end{align*}
which contradicts with the first inequality. In all, we have proved that the identity~(\ref{condition3}) holds.

So far, we have shown that $M'$ satisfies conditions (\ref{equation1}),(\ref{condition0}),(\ref{equation2}) and (\ref{equation3}).

Notice that we may use $GL_n(\mathbb{F}_q[x])$ to reduce the entries above $m'_{l_0,l_0}$ to elements of degree strictly smaller than $\deg(m'_{l_0,l_0})$ without changing the diagonal matrix $D$ attached to $M'$. This reduction also won't change conditions listed in $S_{M_0}$ which $M'$ already satisfies.  Hence we may further assume that $M'$ also satisfies the condition (\ref{identity1}).

Now if $\deg(m'_{l_0,l_0})\leq \deg(d_1)+1$, by the conditions (\ref{equation2}), (\ref{equation3}) and (\ref{identity1}), we know that $M'$ is already of the following form:
\[\begin{pmatrix}D'&*\\0&*\end{pmatrix},D'=
\mathrm{diag}(d_1,d_2,\ldots,d_{l_0-1},d_{l_0})\] for some $d_{l_0}\in \mathbb{F}_q[x]$. Hence the induction step works for $M_0$ which contradicts the assumption. So we can assume that $\deg(m'_{l_0,l_0})> \deg(d_1)+1$.

We are going to construct $M''=(m''_{i,j})_{1\leq i,j\leq n}$ as follows:
\[m''_{l_0,l_0}:= m'_{l_0,l_0}-T_{\deg(d_1)+1}(m'_{l_0,l_0});\]
\[m''_{i,j}=m'_{i,j},\text{ otherwise}.\]
Notice that by the assumption above and conditions (\ref{equation2}), (\ref{equation3}), the $M''$ which we just constructed satisfies condition (\ref{assumption0}).

We are going to prove that $M''$ satisfies 
\begin{equation}\label{condition4}\{g\in GL_{n}(\mathbb{F}_q[x]):gM'\text{ satisfies } \property{k}\}=\{g\in GL_{n}(\mathbb{F}_q[x]):gM''\text{ satisfies } \property{k}\}\end{equation}
which implies that $M''$ satisfies condition~(\ref{condition0}).

Suppose the identity~(\ref{condition4}) does not hold. Imitating what we have done for $M'$, we know that there exists a row vector $(f_1,f_2,\cdots,f_n)\in \mathbb{F}_q[x]^n$ such that either
\[\deg(\underset{1\leq i\leq l_0}{\sum}f_i m'_{i,l_0})>k,\deg(\underset{1\leq i\leq l_0}{\sum}f_i m''_{i,l_0})\leq k\text{ and }\deg(f_im'_{i,i})\leq k, \forall  1\leq i<l_0;\]
or
\[\deg(\underset{1\leq i\leq l_0}{\sum}f_i m'_{i,l_0})\leq k,\deg(\underset{1\leq i\leq l_0}{\sum}f_i m''_{i,l_0})>k\text{ and }\deg(f_im'_{i,i})\leq k, \forall 1\leq i<l_0.\]
holds. In both cases, we have
\[\deg(f_{l_0} T_{\deg(d_1)+1}(m'_{l_0,l_0}))>k\text{ and }\deg(f_im'_{i,i})\leq k, \forall 1\leq i<l_0.\]
So we know that
\begin{equation}\label{identity2}
    \deg(f_{l_0})>k-1-\deg(d_1);
\end{equation}
\begin{equation}\label{identity14}
    \deg(f_{l_0}m''_{l_0,l_0})=\deg(f_{l_0}m'_{l_0,l_0})>k+\deg(m'_{l_0,l_0})-\deg(d_1)-1>k.
\end{equation}

By the fact that one of $\underset{1\leq i\leq l_0}{\sum}f_i m''_{i,l_0}, \underset{1\leq i\leq l_0}{\sum}f_i m'_{i,l_0} $ is of degree smaller of equal than $k$ and the identity~(\ref{identity14}), we know there exists $1\leq i< l_0$ such that \begin{equation}\label{identity0} \deg(f_i m'_{i,l_0})=\deg(f_{l_0}m''_{l_0,l_0}).\end{equation} 

Combining identities and inequalities listed above, we have the following result:
\begin{align*}
    \deg(f_im'_{i,i})
    &= \deg(f_i) + \deg(d_i) \\
    &\overset{(\ref{identity0})}{=} \deg(f_{l_0}) + \deg(m''_{l_0,l_0}) - \deg(m'_{i,l_0}) + \deg(d_i) \\
    &\overset{(\ref{identity2})}{>} k-1 - \deg(d_1) + \deg(m''_{l_0,l_0}) - \deg(m'_{i,l_0}) + \deg(d_i) \\
    &\geq k-1 + \deg(m''_{l_0,l_0}) - \deg(m'_{i,l_0}) \\
    &\overset{(\ref{identity1})}{\geq} k
\end{align*}

which contradicts with the inequality $\deg(f_i m'_{i,i})\leq k$. Hence we have proved that $M''$ satisfies (\ref{condition0}). Notice that the construction of $M''$ does not affect other conditions in $S_{M_0}$ which $M'$ already satisfies. 

Above all, we have constructed an $M''\in S_{M_0}$.
\end{proof}
Notice that for any element $M\in S_{M_0}$ and its attached $d_1,d_2,...,d_{l_0-1}$, we have \[\deg(d_1)+\deg(d_2)+\cdots+\deg(d_{l_0-1})\leq \deg(\det(M))=\deg(\det(M_0)).\]
Thus we may pick $M_{max}\in S_{M_0}$ such that $\deg(d_1)+\deg(d_2)+\cdots+\deg(d_{l_0-1})$ is maximal.

Let's conjugate $M_{max}$ by the permutation matrix corresponding to the two-cycle $(1\ l_0)\in S_n$ to get a matrix $M'_{max}$.

By what we formulated in the proof of the foregoing lemma, $M'_{max}$ still satisfies the condition (\ref{condition0}). Then we may find an upper-triangular matrix in the orbit $GL_n(\mathbb{F}_q[x])M'_{max}$: \[N=\begin{pmatrix}D_N&*\\0&*\end{pmatrix}, D_N\in Mat_{(l_0-1)\times (l_0-1)}(\mathbb{F}_q[x]).\] 
This $N$ also satisfies the condition (\ref{condition0}).

Notice that the above operation does not change the g.c.d of the columns. Thus, using (\ref{equation1}) and (\ref{assumption0}), we know that the g.c.d. of the $i$th column of $N$ is of degree:
\[\begin{cases}\geq \deg(d_1)+1,&i=1\\= \deg(d_i),&2\leq i\leq l_0-1\\=\deg(d_1),& i=l_0
\end{cases}.\]
Thus we know that \[\deg(\det(D_N))\:\geq\: \deg(d_1)+\deg(d_2)+\cdots+\deg(d_{l_0-1})+1\:>\:\deg(\det(D_0)).\]

As well, notice that $\det(N)=c\det(M_{max})$ for some $x\in \mathbb{F}_q^{\times}$. Combining this with the fact that $M_{max}$ satisfies the condition (\ref{equation1}), we know that $\deg(\det(N))=\deg(\det(M_0))$.

Above all, we see that $S_N\subset S_{M_0}$. Also, if the induction step works for $N$, it will work for $M_0$. So, by our assumption, the induction step for $N$ fails as well. Then we may apply the above lemma to $N$, i.e. $S_N$ is not empty. 

Pick up one element $N'\in S_N$ with attached $d'_1,d'_2,\ldots,d'_{l_0-1}$. By the condition (\ref{equation1}), we see that $N'\in S_{M_0}$ has strictly bigger $\deg(d'_1)+\deg(d'_2)+\cdots+\deg(d'_{l_0-1})$ compared with $M_{max}$, which contradicts with our choice of $M_{max}$. Hence we have shown that the induction step for $M_0$ works.

\section{Proof of Lemma~\ref{lemma2}\label{section2}}
We shall prove Lemma~\ref{lemma2} by induction both on $n$ and the sum $\underset{1\leq i\leq n}{\sum}k_i$.

The base case for $n=1$ and $\underset{1\leq i\leq n}{\sum}k_i=0$ is trivial. It suffices to prove the induction step.

Pick $n_0,k> 0$. Suppose that Lemma~\ref{lemma2} holds for any $n,k_1,\ldots,k_n$ such that $n< n_0$ or $\underset{1\leq i\leq n}{\sum}k_i< k$. Now we attack the case where $n=n_0$ or $\underset{1\leq i\leq n}{\sum}k_i=k$.

Define $P_{n;k_1,k_2,\ldots,k_n}$ to be
\[P_{n;k_1,k_2,\ldots,k_n}:=\{A=(a_{i,j})\in GL_n(\mathbb{F}_q[x]):\deg(a_{i,j})\leq k_j,\forall 1\leq i\leq n;A_0=I_n \}.\]

For any $P_{n;k_1,k_2,...,k_{n}} $, let's rewrite $A$ as $\begin{pmatrix}\alpha_1&\alpha_2&\cdots&\alpha_n \end{pmatrix}$ where $\alpha_i$ are column vectors in $\mathbb{F}_q[x]$. Then we decompose these vectors in each degree as follows:
\[\alpha_i=\underset{0\leq j\leq k_i}{\sum}  \alpha_i^{(j)} x^j, \alpha_i^{(j)}\in \mathbb{F}_q^n, i=1,2,\cdots,n_0.\]

Notice that for any $A\in P_{n_0;k_1,k_2,\ldots,k_{n_0}}$, we have $\det(A)\in \mathbb{F}_q$. By checking the degree $k_1+k_2+\cdots+k_{n_0}$ part, we know that  $\det(\begin{pmatrix}\alpha^{(k_1)}_1,\alpha^{(k_2)}_2,\cdots,\alpha^{(k_{n_0})}_{n_0}\end{pmatrix})=0$ which means $\alpha^{(k_1)}_1,\alpha^{(k_2)}_2,\ldots,\alpha^{(k_{n_0})}_{n_0}$ are linearly dependent. 

Let's introduce one further definition before going back to the main proof:
\[
Q^i_{n_0;l_1,\ldots,l_{n_0}}:=\left\{A\in P_{n_0;l_1,l_2,\ldots,l_{n_0}}:\substack{ \alpha^{(l_i)}_i,\ldots,\alpha^{(l_{n_0})}_{n_0}\text{ are linearly dependent}\\ \alpha^{(l_{i+1})}_{i+1},\ldots,\alpha^{(l_{n_0})}_{n_0}\text{ are linearly independent}}\right\}, i=1,2,\ldots, n_0
\]
\[R^i_{n_0;l_1,\ldots,l_{n_0}}:=\left\{A\in P_{n_0;l_1,l_2,\ldots,l_{n_0}}:  \alpha^{(l_{i})}_{i},\ldots,\alpha^{(l_{n_0})}_{n_0}\text{ are linearly dependent}\right\}, i=1,2,\ldots, n_0\]
where \begin{equation}\label{counting0}Q^{n_0}_{n_0;l_1,\ldots,l_{n_0}}=P_{n_0;l_1,l_2,\ldots,l_{n_0}-1}=R^{n_0}_{n_0;l_1,\ldots,l_{n_0}}.\end{equation}

Moreover, we have \begin{equation}\label{counting1} R^i_{n_0;l_1,\ldots,l_{n_0}}=\underset{n_0\geq j\geq i} {\bigsqcup} Q^j_{n_0;l_1,\ldots,l_{n_0}}. \end{equation}

With these notations, let's go back to the main proof. 

Firstly, notice that we may assume $k_1\geq k_2\geq \cdots \geq k_{n_0}$.

If $k_{n_0}=0$, the number which we want to count is:
\[\#\{A=\begin{pmatrix}A'&0\\ *&1\end{pmatrix}\in GL_n(\mathbb{F}_q[x]):A'\in P_{n_0-1;k_1,k_2,\ldots,k_{n_0-1}}\}.\] The $*$ part does not affect the invertibility because $\begin{pmatrix}A'&0\\B&1\end{pmatrix}^{-1}=\begin{pmatrix}A'^{-1}&0\\-BA'^{-1}&1\end{pmatrix}$.  Thus we know that the number we are exactly counting is 
\[q^{k_1+k_2+\cdots+k_{n_0-1}}\#P_{n_0-1;k_1,k_2,\ldots,k_{n_0-1}}.\]
By the induction hypothesis, we will get the result we want.

Now it suffices to deal with the case that $k_{n_0}\geq 1$.
\begin{lemma}\label{useful}
Given a descending sequence $l_i\geq l_{i+1}\geq \cdots\geq l_{n_0}\geq 1, 2\leq i\leq n_0$ and  any nonnegative sequence $l_1,\ldots,l_{i-1}$, we have 
\begin{equation}\label{counting3}
    \#R^i_{n_0;l_1,\cdots,l_{n_0}}=\underset{\tilde{l}=(\tilde{l}_{i},\tilde{l}_{i+1},\cdots,\tilde{l}_{n_0})\in L}{\sum} c_{\tilde{l}}\#P_{l_1,l_2,\cdots,l_{i-1},\tilde{l}_i,\tilde{l}_{i+1},\cdots,\tilde{l}_{n_0}}
\end{equation}
where $L$ is a set of $n_0-i+1$ tuples which only depends on $i,l_{i},l_{i+1},\cdots,l_{n_0}$ such that for any tuple $\tilde{l}\in L$, we have 
\[\tilde{l}_{i}+\tilde{l}_{i+1}+\cdots+\tilde{l}_{n_0}<l_i+l_{i+1}+\cdots+l_{n_0}\]
and $c_{\tilde{l}}$ only depends on $i$ and $\tilde{l}$.
\end{lemma}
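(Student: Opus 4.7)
My plan is to induct in reverse on $i$, starting at $i=n_0$ and running down to $i=2$. The base case $i=n_0$ is immediate from identity~(\ref{counting0}): $R^{n_0}_{n_0;l_1,\ldots,l_{n_0}}=P_{n_0;l_1,\ldots,l_{n_0}-1}$, so the conclusion holds with $L=\{(l_{n_0}-1)\}$ and coefficient $1$. For the induction step I split $\#R^i=\#Q^i+\#R^{i+1}$ using~(\ref{counting1}); the $\#R^{i+1}$ term is already in the required shape by the induction hypothesis, with every tuple $\tilde l'$ from that invocation prepended by $\tilde l_i:=l_i$ (the strict sum inequality is preserved, and the data still depends only on $i,l_i,\ldots,l_{n_0}$). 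So the heart of the argument is computing $\#Q^i$.

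For $A\in Q^i$ the linear independence of $\alpha_{i+1}^{(l_{i+1})},\ldots,\alpha_{n_0}^{(l_{n_0})}$ pins down unique coefficients $c=(c_{i+1},\ldots,c_{n_0})\in\mathbb{F}_q^{n_0-i}$ with $\alpha_i^{(l_i)}=\sum_{k>i}c_k\alpha_k^{(l_k)}$; I would partition $Q^i$ by $c$ and build a degree-reducing bijection for each fiber. In the generic case -- either $l_i>l_{i+1}$, or $c_k=0$ for every $k$ with $l_k=l_i$ -- the column operation $\alpha'_i=\alpha_i-\sum_k c_k x^{l_i-l_k}\alpha_k$ zeroes the top coefficient of column $i$ and preserves its constant term $e_i$ (every surviving multiplier $x^{l_i-l_k}$ has $l_i-l_k\geq 1$, the same non-archimedean device used in Lemma~\ref{lemma1}). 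This yields a bijection of the fiber with $\{A'\in P_{l_1,\ldots,l_i-1,l_{i+1},\ldots,l_{n_0}}: \alpha_{i+1}^{(l_{i+1})},\ldots,\alpha_{n_0}^{(l_{n_0})}\text{ linearly independent}\}$, of cardinality $\#P_{l_1,\ldots,l_i-1,l_{i+1},\ldots,l_{n_0}}-\#R^{i+1}_{l_1,\ldots,l_i-1,l_{i+1},\ldots,l_{n_0}}$, which the induction hypothesis on $R^{i+1}$ reduces further to $\#P$'s at smaller total degree.

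The hard part is the tied subcase $l_i=l_{i+1}$ with some $c_{k_0}\neq 0$ at an index $k_0\in[i+1,j^*]$, where $j^*:=\max\{k: l_k=l_i\}$: the naive column operation on column $i$ no longer preserves $A_0=I$. Here I would take $k_0$ minimal and instead reduce column $k_0$ via $\alpha'_{k_0}=\alpha_{k_0}-c_{k_0}^{-1}\alpha_i+c_{k_0}^{-1}\sum_{k>i,\,k\neq k_0}c_k x^{l_{k_0}-l_k}\alpha_k$; substituting the dependence kills its top coefficient, while the constant term shifts by an explicit error vector $v\in\mathbb{F}_q^{n_0}$ whose nonzero entries sit only at index $i$ and at tied indices strictly above $k_0$. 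In particular $v_{k_0}=0$, so $E:=I-v e_{k_0}^T$ is unipotent, and $A'':=EA'$ has $A''_0=I$ restored with column-degree profile $(l_1,\ldots,l_{k_0}-1,\ldots,l_{n_0})$. The map $A\mapsto(c,A'')$ is invertible, its image is cut out by an explicit top-coefficient condition on $A''$, and a short direct calculation expresses the image's size as a $\mathbb{Z}$-linear combination of $\#P$'s at reduced degrees. Summing fiber sizes over $c$ and applying the induction hypothesis to each $\#R^{i+1}$ term puts $\#Q^i$, and hence $\#R^i$, into the required form. Because every manipulation acts only on columns indexed by $[i,n_0]$ and on rows tied to the top-coefficient dependence among them, the data $(L,\{c_{\tilde l}\})$ depends only on $i$ and $l_i,\ldots,l_{n_0}$, as claimed.
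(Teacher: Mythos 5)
Your proposal follows the paper's skeleton exactly: reverse induction on $i$ (equivalently, induction on $n_0-i$), the base case via~(\ref{counting0}), the split $\#R^i=\#Q^i+\#R^{i+1}$ via~(\ref{counting1}) with the $\#R^{i+1}$ part absorbed by the inductive hypothesis, and a degree-lowering column operation to count $\#Q^i$. So this is not a different route; the one place you depart from the paper is in the ``tied'' subcase $l_i=l_{i+1}=\cdots=l_j$, and there your treatment is both heavier than necessary and not actually completed.

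You are right to flag that the naive column operation $\alpha_i\mapsto\alpha_i-\sum_{k>i}c_kx^{l_i-l_k}\alpha_k$ destroys $A_0=I_n$ when some tied $c_k\neq 0$ --- the paper states the map $f$ in one line and does not address this. But the clean fix is not to switch to reducing column $k_0$: it is to do the same column operation and simultaneously restore $A_0=I_n$ by left-multiplying by the constant matrix $(I+\sum_{\text{tied }k}c_kE_{k,i})$, i.e.\ carry out the tied part of the subtraction as a conjugation by $C=I-\sum_{\text{tied }k}c_kE_{k,i}\in GL_{n_0}(\mathbb F_q)$. Since $C$ is constant, it preserves all degree bounds and the linear (in)dependence of the top coefficients of columns $i+1,\dots,n_0$, so one gets, \emph{uniformly in $c\in\mathbb F_q^{n_0-i}$}, a bijection between the $c$-fiber of $Q^i$ and $P_{n_0;l_1,\dots,l_i-1,\dots,l_{n_0}}\setminus R^{i+1}_{n_0;l_1,\dots,l_i-1,\dots,l_{n_0}}$. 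That yields the paper's formula~(\ref{counting2}) directly, with one tuple and one coefficient, rather than a case split by $k_0$. Your alternative --- reducing column $k_0$ and then left-multiplying by $E=I-ve_{k_0}^T$ --- does land in a $P$ with profile $(l_1,\dots,l_{k_0}-1,\dots,l_{n_0})$ with $A''_0=I_n$; but the promised ``short direct calculation'' identifying the image and its cardinality is exactly the nontrivial step, and you omit it. (If one does work it out, the image is the set of $A''\in P_{n_0;l_1,\dots,l_{k_0}-1,\dots,l_{n_0}}$ whose top coefficients $\alpha_i''^{(l_i)},\alpha_{i+1}''^{(l_{i+1})},\dots,\widehat{\alpha_{k_0}''},\dots,\alpha_{n_0}''^{(l_{n_0})}$ are independent, and a column swap $(i\,k_0)$ conjugation turns this back into $P_{n_0;l_1,\dots,l_i-1,\dots,l_{n_0}}\setminus R^{i+1}$, i.e.\ it agrees with the paper after all --- but none of that is in your write-up.) So: same approach, correct generic case, a legitimate concern about ties that the paper leaves implicit, but an unfinished and unnecessarily roundabout resolution of that concern.
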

\begin{proof}

Let's prove this lemma by induction on $n_0-i$. 

When $n_0-i=0$, we know that $R^i_{n_0;l_1,\cdots,l_{n_0}}$ is exactly $P_{n_0;l_1,\cdots,l_{n_0}-1}$ since a single vector is linearly dependent if and only if it's zero. Thus we may take $c_{\tilde{l}}$ to be $1$ when $\tilde{l}=(l_{n_0}-1)$ and to be $0$ for other choices to make the formula works. Thus the base case is proved.

Now let's attack the induction step.

Notice that we can use the last $n_0-i$ columns to reduce elements in $Q^i_{n_0;l_1,\cdots,l_{n_0}}$ to elements in $P_{n_0;l_1,\cdots,l_{i-1},l_i-1,l_{i+1},\cdots,l_{n_0}}\setminus R^{i+1}_{n_0;l_1,\cdots,l_{i-1},l_i-1,l_{i+1},\cdots,l_{n_0}} $\ (due to the ascending assumption). Let's call this map $f$. 

And every element $A$ in $P_{n_0;l_1,\ldots,l_{i-1},l_i-1,l_{i+1},\ldots,l_{n_0}}\setminus R^{i+1}_{n_0;l_1,\ldots,l_{i-1},l_i-1,l_{i+1},\ldots,l_{n_0}} $ give us exactly $q^{n_0-i}$ different elements in $Q^i_{n_0;l_1,\ldots,l_{n_0}}$ which is the preimage $f^{-1}(A)$. 

Thus we know that:
\begin{equation}\label{counting2}\#Q^i_{n_0;l_1,\ldots,l_{n_0}}=q^{n_0-i}(\#P_{n_0;l_1,\ldots,l_{i-1},l_i-1,l_{i+1},\ldots,l_{n_0}}-\#R^{i+1}_{n_0;l_1,\ldots,l_{i-1},l_i-1,l_{i+1},\ldots,l_{n_0}}).\end{equation}

For the new sub-index of $R$, the last $n_0-i$ elements still form a decreasing sequence. So once we substitute identity~(\ref{counting1}) into (\ref{counting2}), we are able to use the induction hypothesis to prove the induction step.
\end{proof}
\begin{corollary}
In addition to those conditions in Lemma~\ref{useful}, we further assume that $l_{1}\geq 1$ and $\underset{1\leq i\leq n_0}{\sum}l_i\leq k$. Then we have 
\begin{equation}\label{counting4}\#R^i_{n_0;l_1,\cdots,l_{n_0}}=q^{n_0-1}\#R^i_{n_0;l_1-1,\cdots,l_{n_0}}.\end{equation}
\end{corollary}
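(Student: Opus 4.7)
The plan is to reduce the identity to a term-by-term comparison via Lemma~\ref{useful} and then invoke the induction hypothesis of Lemma~\ref{lemma2} on each term.

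First I would apply Lemma~\ref{useful} to both sides. Crucially, the index set $L$ depends only on $i,l_i,l_{i+1},\ldots,l_{n_0}$, and each coefficient $c_{\tilde{l}}$ depends only on $i$ and $\tilde{l}$; none of these data involve $l_1,\ldots,l_{i-1}$. Hence replacing $l_1$ by $l_1-1$ on both sides produces the same $L$ and the same coefficients, giving
\[
\#R^i_{n_0;l_1,\ldots,l_{n_0}} = \sum_{\tilde{l}\in L} c_{\tilde{l}}\,\#P_{n_0;l_1,l_2,\ldots,l_{i-1},\tilde{l}_i,\ldots,\tilde{l}_{n_0}},
\]
\[
\#R^i_{n_0;l_1-1,\ldots,l_{n_0}} = \sum_{\tilde{l}\in L} c_{\tilde{l}}\,\#P_{n_0;l_1-1,l_2,\ldots,l_{i-1},\tilde{l}_i,\ldots,\tilde{l}_{n_0}}.
\]
So it suffices to show that for every $\tilde{l}\in L$ the corresponding $\#P$'s differ by a factor of $q^{n_0-1}$.

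Next I would verify that the induction hypothesis of Lemma~\ref{lemma2} applies to each such $\#P_{n_0;l_1,l_2,\ldots,l_{i-1},\tilde{l}_i,\ldots,\tilde{l}_{n_0}}$. By the strict inequality in Lemma~\ref{useful} we have $\tilde{l}_i+\cdots+\tilde{l}_{n_0}<l_i+\cdots+l_{n_0}$, and combined with the assumption $\sum_j l_j\le k$ this gives
\[
l_1+l_2+\cdots+l_{i-1}+\tilde{l}_i+\cdots+\tilde{l}_{n_0} < l_1+l_2+\cdots+l_{n_0}\le k,
\]
so the total sum in each such $P$-index is strictly less than $k$. The induction hypothesis of Lemma~\ref{lemma2} therefore yields the closed form $\#P_{n_0;\cdots}=q^{(n_0-1)(\text{sum of indices})}$. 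Decreasing the first index from $l_1$ to $l_1-1$ drops this sum by exactly $1$, so each pair of corresponding summands has ratio $q^{n_0-1}$, and pulling this common factor out of the sum finishes the proof.

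The potential obstacle is ensuring the induction hypothesis really is available on every term in the sum, which is precisely why Lemma~\ref{useful} was stated with the strict bound $\tilde{l}_i+\cdots+\tilde{l}_{n_0}<l_i+\cdots+l_{n_0}$; this strictness, together with $\sum l_j\le k$, is exactly what we need. The argument also tacitly uses that each intermediate $P_{n_0;\cdots}$ has all nonnegative indices, which follows from $l_1\ge 1$ and the nonnegativity of the $\tilde{l}_j$ (a structural feature of the set $L$ inherited from its inductive construction in Lemma~\ref{useful}).
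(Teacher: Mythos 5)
Your proposal is correct and takes essentially the same approach as the paper: both expand $\#R^i$ via Lemma~\ref{useful}, apply the induction hypothesis of Lemma~\ref{lemma2} to each $\#P$ term to get the closed form $q^{(n_0-1)(\text{sum})}$, and pull out the common factor $q^{n_0-1}$. You make explicit two points the paper leaves implicit—that $L$ and $c_{\tilde{l}}$ are independent of $l_1$ so the sums align term by term, and that the strict inequality $\tilde{l}_i+\cdots+\tilde{l}_{n_0}<l_i+\cdots+l_{n_0}$ together with $\sum l_j\le k$ is exactly what lets the induction hypothesis fire—which is a worthwhile clarification but not a different argument.
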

\begin{proof}
Just use the induction hypothesis directly in the identity~(\ref{counting3}):
\begin{equation}
    \begin{split}&\#R^i_{n_0;l_1,\cdots,l_{n_0}}\\=&\underset{\tilde{l}=(\tilde{l}_{i},\tilde{l}_{i},\cdots,\tilde{l}_{n_0})\in L}{\sum} c_{\tilde{l}}\#P_{l_1,l_2,\cdots,l_{i-1},\tilde{l}_i,\tilde{l}_{i+1},\cdots,\tilde{l}_{n_0}}\\=&q^{n_0-1}\underset{\tilde{l}=(\tilde{l}_{i},\tilde{l}_{i},\cdots,\tilde{l}_{n_0})\in L}{\sum} c_{\tilde{l}}\#P_{l_1-1,l_2,\cdots,l_{i-1},\tilde{l}_i,\tilde{l}_{i+1},\cdots,\tilde{l}_{n_0}}
    \\=&q^{n_0-1}\#R^i_{n_0;l_1-1,\cdots,l_{n_0}}.\end{split}
\end{equation}
\end{proof}

Let's take $i=2$ and $l_j=k_j,1\leq j\leq n_0$ into the identity~(\ref{counting4}), we will get that
\begin{equation}\label{final}\#R^2_{n_0;k_1,k_2,\cdots,k_{n_0}}=q^{n_0-1}\#R^2_{n_0;k_1-1,k_2,\cdots,k_{n_0}}.\end{equation}

Above all, we are able to make the following calculation:
\begin{align*}
    &\#P_{n_0;k_1,k_2,\cdots,k_{n_0}}\\=&\#Q^1_{n_0;k_1,k_2,\cdots,k_{n_0}}+\#R^2_{n_0;k_1,k_2,\cdots,k_{n_0}}\\\overset{(\ref{counting2})}{=}&q^{n_0-1}(\#P_{n_0;k_1-1,k_2,\cdots,k_{n_0}}-\#R^2_{n_0;k_1-1,k_2,\cdots,k_{n_0}})+\#R^2_{n_0;k_1,k_2,\cdots,k_{n_0}}\\\overset{(\ref{final})}{=}&q^{n_0-1}\#P_{n_0;k_1-1,k_2,\cdots,k_{n_0}}\\=&q^{(n_0-1)\underset{1\leq i\leq n_0}{\sum}k_i}.
\end{align*}
The last identity is due to the induction hypothesis. We are done with the induction step now.
\section*{Acknowledgement}

The author is grateful to Jordan Ellenberg, Allen Knutson, Liang Xiao, and Runlin Zhang for engaging in insightful discussions. The author would also like to extend their appreciation to the reviewer for their invaluable comments.

\bibliographystyle{abbrv}
\bibliography{referrences}

\end{document}